\newcommand{\Z}{\mathbb{Z}}
\newcommand{\N}{\mathbb{N}}
\newcommand{\C}{\mathbb{C}}
\newcommand{\Dr}{\mathcal{D}_r}
\newcommand{\Drs}{\mathcal{D}_{r^s}}
\newcommand{\Brs}{\mathcal{B}_{r,s}}
\newcommand{\Br}{\mathcal{B}_{r}}
\newtheorem{thm}{Theorem}[section]
\newtheorem{cor}[thm]{Corollary}
\newtheorem{prop}[thm]{Proposition}
\newtheorem{lemma}[thm]{Lemma}
\newtheorem{res}[thm]{Result}
\newtheorem{defn}[thm]{Definition}
\title[DFT of $(r,s)-$even functions]{The discrete Fourier transform of $\mathbf{(r,s)-}$even functions}
\author{K Vishnu Namboothiri}
\address{Department of Collegiate Education, Government of Kerala, India}
\address{Department of Mathematics, Government Polytechnic College, Vennikulam, Pathanamthitta, Kerala - 689 544, India}
\email{kvnamboothiri@gmail.com}
\begin{document}

\begin{abstract}
An $(r,s)$-even function is a special type of periodic function mod $r^s$. These functions were defined and studied for the the first time by McCarthy. An important example for such a function is a generalization of Ramanujan sum defined by Cohen. In this paper, we give a detailed analysis of DFT of $(r,s)$-even functions and use it to prove some interesting results including a generalization of the H\"{o}lder identity. We also use DFT to give shorter proofs of certain well known results and identities .  

\end{abstract}

\keywords{ Periodic functions, $(r,s)-$even functions, multiplicative functions, discrete Fourier transform, generalized Ramanujan sum, linear congruences, H\"{o}lder's identity}
\subjclass[2010]{11A25, 11L03} 

\maketitle

\section{Introduction}
This article is a study on the discrete Fourier transform (DFT) and its various applications on a special class of periodic fuctions called the $(r,s)$-even functions. These functions were  defined by McCarthy in \cite{mccarthy1960generation}. He called the collection of such functions to be \emph{$E_s$ class}. It is a generalization of the $r$-even functions which was introduced and studied by Cohen \cite{cohen1955class, cohen1958representations}.  $(r,s)$-even functions are $r^s$-periodic and $(r,1)$-even functions are $r$-even functions itself.

The applications of DFT of periodic functions has found an important place in various fields like number theory, digital communications engineering, vibration analysis, digital signal and image processing etc. Some interesting results using DFT in signal processing is given by Samadi \emph{et al.} \cite{samadi2005ramanujan}. DFT was applied to study the GCD function (which is $r$-even) by Schramm \cite{schramm2008fourier}. Certain number theoretic aspects of DFT can be found in Haukkanen \cite{haukkanen2012discrete} and  Beck and Halloran \cite{beck2010finite}. A detailed study of DFT right from the ground level leading up to various applications can be found in the books of Sundararajan \cite{sundararajan2001discrete} and Terras \cite{terras1999fourier}.

There have been many parallels between the studies of $r$-even functions and $(r,s)$-even functions. For example, Cohen  \cite{cohen1955class} observed that the classical Ramanujan sum $c_r(n)$ which is defined as the sum of $n^{\text{th}}$ powers of $r^{\text{th}}$ primitive roots of unity is $r$-even. Parallel to this, it follows from \cite[Lemma 2]{cohen1950extension} that the generalization of the Ramanujan sum $c_{r,s}(n)$ given in \cite{cohen1949extension} is $(r,s)$-even. Cohen gave a discrete Fourier expansion of $r$-even functions in \cite[Theorem 1]{cohen1955class}. The $(r,s)$-even functions were given an analogous expansion by McCarthy in \cite[Theorem 1]{mccarthy1960generation}. 

The DFT of $r$-even functions was given a detailed analysis by T\'{o}th and Haukkanen  in \cite{toth2011discrete}. There they gave a number of new results and shorter proofs of some existing results using DFT. Parallel to this, in this paper we analyse the theory of the DFT of $(r,s)$-even  functions. We give an inversion formula for the $(r,s)$-even  functions and  prove a result showing that an $(r,s)$-even function which is multiplicative in one variable $r$ is infact multiplicative in two variables. We also give a generalization of the H\"{o}lder identity and give simpler proofs for certain existing and known results. 
\section{Notations and basic results}

Let  $s\in\N$. For $a,b\in \Z$ with atleast one of them non zero, the generalized gcd of these numbers $(a,b)_s$ is defined to be the largest $l^s\in\N$ dividing $a$ and $b$ simultaneously. Therefore $(a,b)_1=(a,b)$, the usual gcd of two integers. The set of all arithmetic functions will be denoted by $\mathcal{F}$. The arithmetic function $\tau(n)$ will denote the number of positive divisors of $n$. An arithmetic function is said to be $(r,s)$-even  if $f((n,r^s)_s)=f(n)$ for all $n\in\N$. An $(r,1)$-even function turns out to be $r-$even.

The set of all $r$-periodic functions, $r$-even functions and $(r,s)$-even  functions will be denoted by $\Dr$, $\Br$, and $\Brs$ respectively. Note that $\Dr \supset \Br$, $\Drs\supset \Dr$ and $\Drs\supset \Brs$. $\Dr$ is an $r$-dimensional subspace of the $\C$-linear space $\mathcal{F}$. It has the functions $\delta_k(.)$ as the standard basis where
\begin{eqnarray*}
\delta_k(n) =\begin{cases} 1\quad \text{if } n\equiv k (\text{mod } r)\\
 0 \quad \text{otherwise}\end{cases}
\end{eqnarray*}
for $1\leq k \leq r$. Another basis for $\Dr$ is given by the functions $e_k$ defined as $e_k(n)=\exp(\frac{2\pi ink}{r})$ for $1\leq k \leq r$. So every $f\in\Dr$ has a Fourier expansion of the form
\begin{equation}\label{fourier1}
 f(n)=\sum\limits_{1\leq k \leq r}g(k)e_k(n)
\end{equation}
where the uniquely determined Fourier coefficients  are given by the expression
\begin{equation}\label{fourier2}
 g(n)=\frac{1}{r}\sum\limits_{1\leq k \leq r}f(k)e_k(-n).
\end{equation}
Since $f, e_k\in\Dr$, so is $g$.
For an $f\in\Dr$, its DFT is given by the function
\begin{equation}\label{fourier3}
 \hat{f}(n) = \sum\limits_{1\leq k \leq r}f(k)e_k(-n).
\end{equation}

Comparing the equations (\ref{fourier2}) and (\ref{fourier3}), we get $\hat{f}=rg$. Since $f, e_k\in\Dr$,  $\hat{f}\in\Dr$. The Inverse Discrete Fourier Transform (IDFT) gives
\begin{equation}
 f(n)= \frac{1}{r}\sum\limits_{1\leq k \leq r}\hat{f}(k)e_k(n).
\end{equation}
Further, we have $\hat{\hat{f}}=rf$.
Details of the above statements can be found in \cite[Chapter~8]{tom1976introduction}.

For $f\in\Dr$, we have the following Parseval identity:
\begin{equation}
 \sum\limits_{n=1}^r|\hat{f}(n)| ^2= r \sum\limits_{n=1}^r|f(n)|^2 \label{parseval}.
\end{equation}
A proof for this identity can be found in  \cite[Chapter~4]{montgomery2006multiplicative}.

The usual Dirichlet convolution is denoted using the operator $*$. Therefore, for $f,g\in\mathcal{F}, n\in\N$, we have $$(f*g)(n)=\sum\limits_{d|n}f(d)g(n/d).$$ Note that the set of all $f\in\mathcal{F}$ with $f(1)\neq 0$ forms a group with respect to this convolution. The identity for this convolution is the function $\varepsilon$ defined by
\begin{equation*}
 \varepsilon(n) = [1/n]
\end{equation*}
where for a real number $x$, $[x]$ denotes the greatest integer not exceeding $x$. The M\"{o}bius function $\mu$ and the constant function $\mathbf{1}$ satisfies $\mu * \mathbf{1} = \varepsilon$.

For $f,g\in\Dr$, the Cauchy convolution operator $\otimes$ is defined as $$(f\otimes g)(n) = \sum\limits_{k = 1}^{r}f(k)g(r-k).$$ The DFT of the Cauchy convolution of $f$ and $g$ satisfies $\widehat{f\otimes g}=\hat{f}\hat{g}$ and $\hat{f}\otimes \hat{g} = r\widehat{fg}$. See \cite[Chapter 2]{terras1999fourier} for a proof of these properties.

\section{DFT of $(r,s)$-even functions}
 Since $(n+r^s,r^s)_s = (n,r^s)_s$, as we have already stated in the beginning, $(r,s)$-even  functions are $r^s$ periodic. That is, $\Brs\subset\Drs$. Consider the functions $g_d$ defined by 
\begin{eqnarray*}
 g_d(n) =\begin{cases}1 \text{ if } (n, r^s)_s=d^s\\
 0 \text{ otherwise }\end{cases}.
\end{eqnarray*}
It can be easily verified that these $\tau(r)$ functions form a basis of $\Brs$. By $c_{r,s}$, we denote the generalization of the Ramanujan sum defined as
\begin{equation}
 c_{r,s}(n)=\sum\limits_{\substack{j=1\\(j,r^s)_s=1}}^{r^s}\exp\left(\frac{2\pi i nj}{r^s}\right).
\end{equation}

For the generalized Ramanujan sum, the following identity was proved in \cite[Section 2]{cohen1949extension}:

\begin{equation}\label{eq:rs-gen-mu}
 c_{r,s}(n)= \sum_{d|(n,r^s)_s} d^s\mu(\frac{r}{d}).
\end{equation}

The following lemma was proved by this author in \cite{namboothiri4}. 
\begin{lemma}
 Let $e|n$. Then $c_{e,s}$ is $(n,s)$-even. That is, $ c_{e,s}(m) = c_{e,s}\left((m,n^s)_s\right)$.
 \end{lemma}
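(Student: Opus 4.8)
The plan is to reduce everything to the divisor-sum formula \eqref{eq:rs-gen-mu}. Applying that identity with $r=e$ gives
\[
c_{e,s}(m)=\sum_{d\mid (m,e^s)_s}d^s\mu(e/d),
\]
so the value $c_{e,s}(m)$ is determined solely by the generalized gcd $(m,e^s)_s$. Applying the same formula at the argument $(m,n^s)_s$ produces the analogous expansion whose index set is $d\mid\bigl((m,n^s)_s,e^s\bigr)_s$. Hence it suffices to prove the purely arithmetic identity
\[
(m,e^s)_s=\bigl((m,n^s)_s,\,e^s\bigr)_s\qquad\text{for every }m\in\N,
\]
under the hypothesis $e\mid n$, since then the two sums are literally the same.

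For this arithmetic identity I would first record the routine fact that $(a,b)_s$, besides being the \emph{largest} $l^s$ dividing $a$ and $b$, is actually divisible by every $k^s$ that divides both $a$ and $b$: if $k^s\mid a,b$ and $l^s\mid a,b$, then $\mathrm{lcm}(k,l)^s=\mathrm{lcm}(k^s,l^s)$ also divides $a$ and $b$, and maximality of $l$ forces $k\mid l$. With this in hand I would argue by mutual divisibility. From $e\mid n$ we get $e^s\mid n^s$. Write $(m,e^s)_s=a^s$; then $a^s\mid m$ and $a\mid e\mid n$, so $a^s\mid n^s$, hence $a^s$ divides both $m$ and $n^s$, so $a^s\mid (m,n^s)_s$; since also $a^s\mid e^s$, we obtain $a^s\mid\bigl((m,n^s)_s,e^s\bigr)_s$. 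Conversely, write $\bigl((m,n^s)_s,e^s\bigr)_s=c^s$; then $c^s\mid (m,n^s)_s$ and $(m,n^s)_s\mid m$, so $c^s\mid m$, and with $c^s\mid e^s$ this gives $c^s\mid (m,e^s)_s=a^s$. Therefore $a^s=c^s$, which is exactly the claimed identity.

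Substituting this identity into the two divisor-sum expansions above then shows $c_{e,s}(m)=c_{e,s}\bigl((m,n^s)_s\bigr)$, i.e. $c_{e,s}$ is $(n,s)$-even. The only mildly delicate point is the ``maximal $\Rightarrow$ divisible-by-all'' property of $(\cdot,\cdot)_s$; the rest is bookkeeping with the definitions. Alternatively, one can avoid the explicit formula altogether: since $c_{e,s}$ is $(e,s)$-even we have $c_{e,s}(x)=c_{e,s}\bigl((x,e^s)_s\bigr)$ for all $x$, and applying this at $x=(m,n^s)_s$ and at $x=m$, together with the same arithmetic identity, yields $c_{e,s}\bigl((m,n^s)_s\bigr)=c_{e,s}\bigl(((m,n^s)_s,e^s)_s\bigr)=c_{e,s}\bigl((m,e^s)_s\bigr)=c_{e,s}(m)$.
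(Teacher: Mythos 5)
Your proof is correct. Note that the paper itself offers no argument for this lemma --- it simply cites the author's earlier preprint \cite{namboothiri4} --- so there is no in-paper proof to compare against; what you have written is a complete, self-contained derivation of the cited fact. Your reduction is the natural one: by \eqref{eq:rs-gen-mu} the value $c_{e,s}(m)$ depends on $m$ only through $(m,e^s)_s$, so everything rests on the arithmetic identity $(m,e^s)_s=\bigl((m,n^s)_s,e^s\bigr)_s$ for $e\mid n$. You correctly identified the one genuinely delicate point, namely that the paper defines $(a,b)_s$ only by maximality, so the ``every common $s$-power divisor divides $(a,b)_s$'' property must be supplied before the mutual-divisibility argument can run; your lcm argument (using $\mathrm{lcm}(k,l)^s=\mathrm{lcm}(k^s,l^s)$ and maximality to force $k\mid l$) settles this. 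The two directions of the divisibility argument are then routine and correct: $a^s=(m,e^s)_s$ divides $m$ and, via $a\mid e\mid n$, divides $n^s$, hence divides $(m,n^s)_s$ and then $\bigl((m,n^s)_s,e^s\bigr)_s$; conversely $\bigl((m,n^s)_s,e^s\bigr)_s$ divides $m$ and $e^s$, hence divides $(m,e^s)_s$. The closing alternative (using only that $c_{e,s}$ is $(e,s)$-even, which is the case $n=e$ of the same formula, together with the identical gcd identity) is also valid and arguably cleaner, since it avoids committing to the exact indexing convention in \eqref{eq:rs-gen-mu}.
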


Therefore, the $\tau(r)$ functions $c_{d,s}$ for $d|r$ are in $\Brs$. Let $J_s(n)$ denote the Jordan totient function defined to be the number of  positive integers $k$ less than or equal to $n$ with $(k,n)_s=1$. McCarthy proved that
\begin{res}{\cite[Theorem 1]{mccarthy1960generation}}\label{mccarthyalpha}
An arithmetic function $f\in\Brs$ has a unique representation of the form 
\begin{equation}
 f(n)=\sum\limits_{d|r}\alpha_f(d)c_{d,s}(n)
\end{equation}
where 
\begin{eqnarray}
 \alpha_f(d) &=& \frac{1}{r^s}\sum\limits_{e|r}f(e^s)c_{\frac{r}{e},s}\left(\left(\frac{r}{d}\right)^s\right)\label{alpha_crs}\\
 &=& \frac{1}{r^sJ_s(d)}\sum\limits_{m=1}^{r^s}f(m)c_{d,s}(m).
\end{eqnarray}
\end{res}

So we see that $c_{d,s}$ form another basis for $\Brs$. Write  $e(n)=\exp(2\pi i n)$.
If $f\in\Brs$, its DFT is
\begin{eqnarray}
 \hat{f}(n) &=& \sum_{k(\text{mod } r^s)} f(k )e(-\frac{kn}{r^s})\nonumber\\
 &=& \sum_{d|r}f(d^s)\sum\limits_{\substack{1\leq j\leq \frac{r^s}{d^s} \\ (j, \frac{r^s}{d^s})_s = 1}}e(-\frac{jd^sn}{r^s})\nonumber\\
 &=& \sum_{d|r}f(d^s)c_{\frac{r}{d},s}(n)\label{fhat_crs}.
\end{eqnarray}
Since $f$ and $c_{\frac{r}{d},s}(.)$ are $(r,s)$-even , $\hat{f}(n)$ is also $(r,s)$-even . 
On comparing equations(\ref{alpha_crs}) and (\ref{fhat_crs}), we get
\begin{equation}
 r^s\alpha_f(d) = \hat{f}\left(\left(\frac{r}{d}\right)^s\right).
\end{equation}

Note also that for $f\in\Brs$ we have $\hat{\hat{f}}=r^sf$ and so 
\begin{equation}
f(n)= \frac{1}{r^s}\sum_{d|r}\hat{f}(d^s)c_{\frac{r}{d},s}(n) \label{f_crs}.
\end{equation}

\section{DFT of the generalized Ramanujan sum}
Consider the function $\rho_{r,s}$ defined by 
\begin{equation}
 \rho_{r,s}(n) =\begin{cases} 1 \text{ if }(n,r^s)_s=1\\
0   \text{ otherwise }\end{cases}.
\end{equation}
Note that $ ((n,r^s)_s,r^s)_s = (n,r^s)_s $. So if  $\rho_{r,s}(n)=1$, then $(n,r^s)_s = 1 \Rightarrow  ((n,r^s)_s,r^s)_s = 1 \Rightarrow \rho_{r,s}((n,r^s)_s) = 1$. Similary if  $\rho_{r,s}(n)=0$, then $(n,r^s)_s \neq  1 \Rightarrow  ((n,r^s)_s,r^s)_s \neq  1 \Rightarrow \rho_{r,s}((n,r^s)_s) = 0$.
So it follows that $\rho_{r,s}$ is $(r,s)$-even. 

Now the DFT of $\rho_{r,s}$ (as an $(r,s)$-even  function) is 
\begin{eqnarray}
\hat{\rho}_{r,s}(n) &=& \sum_{d|r}\rho_{r,s} (d^s)c_{\frac{r}{d},s}(n)\nonumber\\
 &=& c_{r,s}(n)\label{rhoDFT}
\end{eqnarray}
because for $d|r$, $(d^s, r^s)_s=1$ if and only if $d=1$. For any other $d|r$, $\rho_{r,s} (d^s)$ is $0$.

Now $$\hat{\hat{\rho}}_{r,s}(n)=r^s\rho_{r,s}(n)$$ and so $\hat{c}_{r,s}(n) = r^s\rho_{r,s}(n)$.

\section{Some applications of DFT}
In this section, we give new proofs for certain known results using DFT. Our proofs are shorter than the already known proofs.

Cohen in \cite{cohen1956extensionof} gave the following result mainly using the orthogonality relations of the generalized Ramanujan sum. We give an alternate proof  using DFT.
\begin{prop}\cite[Theorem 2]{cohen1956extensionof}
\newline
 $\sum\limits_{d|r}c_{d,s}(n) c_{r,s}(\left(\frac{r}{d}\right)^s)=\begin{cases}
                                                                  r^s \text{ if }(n,r^s)_s=1\\
                                                                  0 \text{ otherwise}
                                                                 \end{cases}
$
\begin{proof}
 We have already observed that $r^s f(n)= \sum\limits_{d|r}\hat{f}(d^s)c_{\frac{r}{d},s}(n) $ and that $\hat{\rho}_{r,s}(n) = c_{r,s}(n)$. So 
 \begin{eqnarray*}
  r^s \rho_{r,s}(n) &=& \sum\limits_{d|r}c_{r,s}(d^s)c_{\frac{r}{d},s}(n)\\
   &=& \sum\limits_{d|r}c_{r,s}\left(\frac{r^s}{d^s}\right)c_{d,s}(n).
 \end{eqnarray*}
  By definition, $\rho_{r,s}(n)=1$ if $(n,r^s)_s=1$ and 0 otherwise. Now the claim follows.
\end{proof}

\end{prop}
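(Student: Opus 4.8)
The plan is to read the left-hand side as a rescaled inverse discrete Fourier transform and to recognise the function being inverted as $\rho_{r,s}$. Concretely, I would use the reconstruction identity $r^s f(n)=\sum_{d\mid r}\hat f(d^s)c_{\frac{r}{d},s}(n)$, valid for every $f\in\Brs$ (equation~(\ref{f_crs})), together with the computation $\hat\rho_{r,s}(n)=c_{r,s}(n)$ established in the previous section (equation~(\ref{rhoDFT})).

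First I would note that $\rho_{r,s}\in\Brs$, so the reconstruction identity applies with $f=\rho_{r,s}$ and yields
\[
r^s\rho_{r,s}(n)=\sum_{d\mid r}\hat\rho_{r,s}(d^s)\,c_{\frac{r}{d},s}(n)=\sum_{d\mid r}c_{r,s}(d^s)\,c_{\frac{r}{d},s}(n).
\]
Next I would re-index the sum through the involution $d\mapsto r/d$ on the divisors of $r$: this sends $c_{r,s}(d^s)$ to $c_{r,s}\!\left(\left(\tfrac{r}{d}\right)^s\right)$ and $c_{\frac{r}{d},s}(n)$ to $c_{d,s}(n)$, so the right-hand side becomes $\sum_{d\mid r}c_{d,s}(n)\,c_{r,s}\!\left(\left(\tfrac{r}{d}\right)^s\right)$, which is exactly the left-hand side of the claim. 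Finally, substituting the definition of $\rho_{r,s}$ — it equals $1$ when $(n,r^s)_s=1$ and $0$ otherwise — and multiplying through by $r^s$ produces the two cases in the statement.

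The hard part is essentially nonexistent; the only point that needs a word of justification is that evaluating the relevant $(r,s)$-even functions at the specific arguments $d^s$ (rather than at an arbitrary representative of the corresponding residue class) is legitimate, which holds because $(d^s,r^s)_s=d^s$ for $d\mid r$. As a sanity check and an alternative, the same display can be reached either by expanding $\hat c_{r,s}$ via $\hat f(n)=\sum_{d\mid r}f(d^s)c_{\frac{r}{d},s}(n)$ and using $\hat c_{r,s}=r^s\rho_{r,s}$, or by computing the expansion coefficients of $\rho_{r,s}$ directly from Result~\ref{mccarthyalpha}, where $\alpha_{\rho_{r,s}}(d)=\tfrac{1}{r^s}c_{r,s}\!\left(\left(\tfrac{r}{d}\right)^s\right)$ since $\rho_{r,s}(e^s)$ vanishes unless $e=1$. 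Each of these routes is only a few lines.
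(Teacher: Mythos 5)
Your proposal is correct and follows essentially the same route as the paper: apply the reconstruction identity (\ref{f_crs}) to $f=\rho_{r,s}$, substitute $\hat\rho_{r,s}=c_{r,s}$ from (\ref{rhoDFT}), and re-index the sum by $d\mapsto r/d$. The extra remark about evaluating at $d^s$ and the alternative derivations are fine but not needed; the paper's proof is the same two-line computation.
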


Our next result generalizes Theorem 4 of \cite{nicol1954sterneck}:
\begin{prop}
 For any even number $r$ and positive integer $s$, 
 \begin{equation*}
  \sum\limits_{d|r}(-1)^{d^s}c_{\frac{r}{d},s}(n)=\begin{cases}
                                                   r^s \text{ if } n\equiv \frac{r^s}{2}(\text{mod }r^s)\\
                                                   0 \text{ otherwise}
                                                  \end{cases}.
 \end{equation*}
\begin{proof}
 Consider the function $f(n)=(-1)^{n^s}$.
 
 \noindent\textbf{Claim:} $f$ is $(r,s)-$even.
 \begin{enumerate}[{Case }1:]
  \item For $n$ odd, $n^s$ is odd and so $(-1)^{n^s} = -1$. Also $(n^s,r^s)_s$ is odd, so that $(-1)^{(n^s,r^s)_s}=-1$.
  \item For $n$ even, $n^s$ is even and so $(-1)^{n^s} = 1$. Also $(n^s,r^s)_s$ is even (as atleast $2^s|n^s$ and $2^s|r^s$),  so $(-1)^{(n^s,r^s)_s}=1$.
 \end{enumerate}
 Hence in either case, $f(n)=(-1)^{n^s}=(-1)^{(n^s,r^s)_s}$. So the DFT of $(r,s)$-even  functions can be applied to $f$. Hence
 \begin{eqnarray*}
  \hat{f}(n) &=& \sum\limits_{d|r}f(d^s)c_{\frac{r}{d},s}(n)\\
  &=&\sum\limits_{d|r}\left((-1)^{d^s}\right)^{d^s}c_{\frac{r}{d},s}(n)\\
  &=&\sum\limits_{d|r}(-1)^{d^s} c_{\frac{r}{d},s}(n).
 \end{eqnarray*}
 But by the DFT of $r^s$ periodic functions, we have
 \begin{eqnarray*}
  \hat{f}(n) &=& \sum\limits_{k = 1}^{r^s}f(k) \exp\left(\frac{-2\pi i kn}{r^s}\right)\\
   &=& \sum\limits_{k = 1}^{r^s} (-1)^{k^s} \exp\left(\frac{-2\pi i kn}{r^s}\right)\\
   &=& \sum\limits_{k = 1}^{r^s} \left(-\exp\left(\frac{-2\pi i n}{r^s}\right)\right)^k.
 \end{eqnarray*}
If $\frac{n}{r^s} = \frac{1}{2}$, then  we have to sum $-\exp(-\pi i) = 1$ from $k = 1$ to $r^s$  which then equals $r^s$. If $\frac{n}{r^s}$ is an integer then since $r^s$ is even, $k^{\text{th}}$ powers of $-\exp\left(\frac{-2\pi i n}{r^s}\right) = -1$ alternates between $+1$ and $-1$  an equal number of times  cancelling each other. In any other case, $-\exp\left(\frac{-2\pi i n}{r^s}\right)$ is a non integer root of unity and the sum the powers will give 0 since $r^s$ is even.

\end{proof}
\end{prop}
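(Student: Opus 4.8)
The plan is to run exactly the DFT-comparison argument used in the preceding proposition, now with the auxiliary function $f(n)=(-1)^{n^s}$. First I would check that $f\in\Brs$. Since $r$ is even, $2^s\mid r^s$, so writing $(n,r^s)_s=l^s$ we have $2\mid l$ precisely when $2\mid n$; as the parity of $m^s$ agrees with the parity of $m$ for every $m$, separating the cases $n$ odd and $n$ even gives
\[
f(n)=(-1)^{n^s}=(-1)^{(n,r^s)_s}=f\big((n,r^s)_s\big),
\]
so $f$ is $(r,s)$-even. This is the only place the hypothesis that $r$ is even is used, and it is the step I would be most careful about.

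Once $f\in\Brs$ is in hand, I would evaluate $\hat f$ in two ways. Using \eqref{fhat_crs},
\[
\hat f(n)=\sum_{d\mid r}f(d^s)c_{\frac{r}{d},s}(n)=\sum_{d\mid r}(-1)^{d^{2s}}c_{\frac{r}{d},s}(n)=\sum_{d\mid r}(-1)^{d^s}c_{\frac{r}{d},s}(n),
\]
where the last equality holds because $d^{2s}$ and $d^s$ have the same parity; this is exactly the left-hand side of the claimed identity. On the other hand, computing $\hat f$ directly from the definition of the DFT of an $r^s$-periodic function and using $(-1)^{k^s}=(-1)^k$ gives
\[
\hat f(n)=\sum_{k=1}^{r^s}(-1)^{k^s}e\!\left(-\tfrac{kn}{r^s}\right)=\sum_{k=1}^{r^s}z^k,\qquad z:=-\,e\!\left(-\tfrac{n}{r^s}\right).
\]

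Finally I would evaluate the finite geometric sum $\sum_{k=1}^{r^s}z^k$. If $n\equiv r^s/2\pmod{r^s}$, then $z=-e(-1/2)=1$ and the sum equals $r^s$. Otherwise $z\neq 1$ (note $z=1$ forces $e(-n/r^s)=-1$, i.e. $n\equiv r^s/2\pmod{r^s}$), while $z^{r^s}=(-1)^{r^s}e(-n)=1$ since $r^s$ is even and $n\in\Z$; hence $\sum_{k=1}^{r^s}z^k=z\,\frac{z^{r^s}-1}{z-1}=0$. Equating the two expressions for $\hat f(n)$ yields the stated identity. The whole argument is routine parity and geometric-series bookkeeping; the only content-bearing step, and the main obstacle, is the verification that $f\in\Brs$, which is precisely where the evenness of $r$ is needed.
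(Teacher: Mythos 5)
Your proposal follows the paper's route exactly: check that $f(n)=(-1)^{n^s}$ is $(r,s)$-even, compute $\hat f$ once via $\hat f(n)=\sum_{d\mid r}f(d^s)c_{\frac{r}{d},s}(n)$ and once directly, and compare. Your evaluation of the direct DFT as a geometric series is correct and in fact tidier than the paper's case-by-case cancellation argument. However, the step you yourself flag as the crux --- that $f\in\Brs$ --- is where the argument fails for $s\ge 2$, and your justification of it is wrong. Writing $(n,r^s)_s=l^s$, the implication $2\mid l\Rightarrow 2\mid n$ is fine (since $l^s\mid n$), but the converse is false: $2\mid n$ forces $2\mid l$ only when $2^s\mid n$. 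Concretely, take $s=2$, $r=2$, $n=2$: the largest square dividing both $2$ and $4$ is $1$, so $(2,4)_2=1$ and $f\bigl((n,r^s)_s\bigr)=f(1)=-1$, whereas $f(2)=(-1)^{4}=+1$. So $f$ is not $(2,2)$-even, and the defect is not cosmetic: for these values $\sum_{d\mid 2}(-1)^{d^2}c_{2/d,2}(2)=-c_{2,2}(2)+c_{1,2}(2)=1+1=2\ne 4=r^s$, and at $n\equiv 0\pmod 4$ the left side equals $-c_{2,2}(0)+1=-2$ rather than $0$. The stated identity therefore fails already at $r=2$, $s=2$.

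In fairness, the paper's own proof carries the same gap: it verifies $(-1)^{n^s}=(-1)^{(n^s,r^s)_s}$, i.e., it compares $f(n)$ with $(-1)^{(n^s,r^s)_s}$ instead of with $f\bigl((n,r^s)_s\bigr)=(-1)^{((n,r^s)_s)^s}$ as the definition of $(r,s)$-even requires; substituting $n^s$ for $n$ inside the generalized gcd is precisely what conceals the parity failure, since $2\mid n$ does give $2^s\mid n^s$. Everything you wrote is correct in the case $s=1$, where $(n,r)$ is even exactly when both $n$ and $r$ are, and that case recovers the Nicol--Vandiver identity being generalized. For $s\ge 2$ the parity of $n$ is simply not determined by $(n,r^s)_s$, so no repair of this step is possible without changing the statement; you were right to identify this as the only content-bearing step, but the argument you (and the paper) give for it does not hold.
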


Recall the notation $\alpha_f$ in the McCarthy's theorem (\ref{mccarthyalpha}). We prove
\begin{thm}\label{CauchyProdInBrs}
 Let $f,g\in\Brs$. Then
 \begin{enumerate}
  \item $f\otimes g\in\Brs$.
  \item $\alpha_{f\otimes g}(d) = r^s\alpha_f(d)\alpha_g(d)$.
 \end{enumerate}
\begin{proof}
By definition $$ (f\otimes g)(n) = \sum\limits_{k(\text{mod }r^s)} f(k) g(n-k).$$
   Given that $f,g\in\Brs\subset \Drs$. So $\widehat{f\otimes g}=\hat{f}\hat{g}$ and
 \begin{eqnarray*}
  \widehat{f\otimes g}((n,r^s)_s) &=& \hat{f}((n,r^s)_s) \hat{g}((n,r^s)_s)\\
  &=& \hat{f}(n) \hat{g}(n)\\
  &=& \widehat{f\otimes g}(n)
  \end{eqnarray*}
 which implies that $\widehat{f\otimes g}$ is $(r,s)$-even . Hence  $\widehat{\widehat{f\otimes g}} = r^s (f\otimes g)$ is $(r,s)$-even  and $f\otimes g$ is $(r,s)$-even .
 
Now 
\begin{eqnarray*}
 \alpha_{f\otimes g}(d) &=& \frac{1}{r^s}(\widehat{f\otimes g})(\left(\frac{r}{d}\right)^s)\\
 &=& \frac{1}{r^s}\hat{f}(\left(\frac{r}{d}\right)^s)\hat{g}(\left(\frac{r}{d}\right)^s)\\
 &=& \alpha_f(d) r^s \alpha_g(d).
\end{eqnarray*}

\end{proof}

\end{thm}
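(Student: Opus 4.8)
The plan is to deduce both parts from identities already in hand for $(r,s)$-even functions: the rule $\widehat{f\otimes g}=\hat f\hat g$ for the Cauchy convolution, the inversion relation $\hat{\hat h}=r^s h$, and the formula $r^s\alpha_h(d)=\hat h\big((\tfrac{r}{d})^s\big)$ that ties the McCarthy coefficients to the DFT. The strategy is first to show that $\widehat{f\otimes g}$ is $(r,s)$-even, then to bounce back once more through the DFT to land on $f\otimes g$ itself, and finally to read off the coefficient identity by a direct substitution.

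For part (1): since $f,g\in\Brs\subset\Drs$ are $r^s$-periodic, replacing $n$ by $n+r^s$ in $(f\otimes g)(n)=\sum_{k(\bmod r^s)}f(k)g(n-k)$ leaves the sum unchanged, so $f\otimes g\in\Drs$ and its DFT is defined. By (\ref{fhat_crs}), $\hat f$ and $\hat g$ are $\C$-linear combinations of the functions $c_{\frac{r}{d},s}$, which are $(r,s)$-even, so $\hat f$ and $\hat g$ are themselves $(r,s)$-even; hence their pointwise product is $(r,s)$-even, and $\widehat{f\otimes g}=\hat f\hat g$ shows $\widehat{f\otimes g}\big((n,r^s)_s\big)=\widehat{f\otimes g}(n)$, i.e. $\widehat{f\otimes g}\in\Brs$. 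Applying the DFT once more and using $\hat{\hat h}=r^s h$ for $h\in\Brs$ shows that $r^s(f\otimes g)=\widehat{\widehat{f\otimes g}}$ is the DFT of the $(r,s)$-even function $\widehat{f\otimes g}$, hence is itself $(r,s)$-even; dividing by $r^s$ gives $f\otimes g\in\Brs$.

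For part (2), with $f\otimes g\in\Brs$ now established, I would apply $r^s\alpha_h(d)=\hat h\big((\tfrac{r}{d})^s\big)$ to $h=f\otimes g$ and substitute $\widehat{f\otimes g}=\hat f\hat g$ together with $\hat f\big((\tfrac{r}{d})^s\big)=r^s\alpha_f(d)$ and $\hat g\big((\tfrac{r}{d})^s\big)=r^s\alpha_g(d)$; this gives $r^s\alpha_{f\otimes g}(d)=r^{2s}\alpha_f(d)\alpha_g(d)$, i.e. $\alpha_{f\otimes g}(d)=r^s\alpha_f(d)\alpha_g(d)$. The only point that needs care — and the closest thing to an obstacle — is the logical order in part (1): one cannot assert that $f\otimes g$ is $(r,s)$-even straight from its DFT without first knowing $f\otimes g\in\Drs$ and invoking the fact (recorded right after (\ref{fhat_crs})) that the DFT of an $(r,s)$-even function is again $(r,s)$-even. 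Once those two facts are lined up, everything else is a mechanical substitution into identities proved earlier.
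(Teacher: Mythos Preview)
Your proposal is correct and follows essentially the same route as the paper: use $\widehat{f\otimes g}=\hat f\hat g$ together with the fact that $\hat f,\hat g\in\Brs$ to see that $\widehat{f\otimes g}\in\Brs$, then apply the DFT once more (using $\hat{\hat h}=r^s h$) to conclude $f\otimes g\in\Brs$, and finally read off the coefficient identity from $r^s\alpha_h(d)=\hat h\big((r/d)^s\big)$. The only difference is cosmetic: you explicitly check $f\otimes g\in\Drs$ and cite (\ref{fhat_crs}) for why $\hat f,\hat g$ are $(r,s)$-even, whereas the paper treats those as understood.
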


Consider the linear congruence equation.
\begin{equation}
 a_1 x_1+\ldots +a_k x_k\equiv n \,(\text{mod } r) \label{gen_lin_cong}.
\end{equation}
If we seek  solutions for this equation with some restrictions on the solution set, like $(x_i,r)=t_i$ for  $1\leq i \leq k$ where $t_i$ are given positive divisors of $r$, then it is called to be a restricted linear congruence. Many authors have attempted to solve such restricted congruences with varying conditions. Cohen \cite{cohen1956extensionof} gave a formula for the number of solutions $N_{r,s}$ of this congruence equation with $a_i=1, t_i=1$, $r$ replaced with $r^s$ and the restriction $(x_i, r^s)_s=1$. Here we provide an alternate method for arriving at his formula using the techniques in DFT. We would like to further remark that the author himself has proved two generalizations of the next theorem in \cite{namboothiri4} and  \cite{namboothiri2018restricted}.
\begin{thm}\cite[Theorem 12, Theorem 12']{cohen1956extensionof}
 Consider the linear congruence equation $x_1+\ldots+x_k\equiv n \,(\text{mod } r^s)$. With the restrictions $(x_i,r^s)_s=1$, the number of solutions of this congruence equation is
 \begin{equation}
  N_{r,s}(n,k) = \frac{1}{r^s}\sum\limits_{d|r} \left(c_{r,s}(\left(\frac{r}{d}\right)^s)\right)^k c_{d,s}(n).
 \end{equation}
\begin{proof}
 The function $\rho_{r,s}\in\Brs\subset\Drs$ plays the lead role here. The Cauchy convolution of $\rho_{r,s}$ taken $k$ copies is
 $$(\rho_{r,s}\otimes\ldots\otimes\rho_{r,s})(n)=\sum\limits_{a_1+\ldots+a_k=n}\rho_{r,s}(a_1)\ldots\rho_{r,s}(a_k)$$ 
 By definition, 
 \begin{equation}
 \rho_{r,s}(a) =\begin{cases} 1 \text{ if }(a,r^s)_s=1\\
  0 \text{ otherwise }\end{cases}.
\end{equation}
Therefore, $ N_{r,s}(n,k) = (\rho_{r,s}\otimes\ldots\otimes\rho_{r,s})(n)$. Since $\rho_{r,s}\in\Brs$, by theorem (\ref{CauchyProdInBrs}) $ N_{r,s}\in\Brs$ and so 
$\widehat{N_{r,s}} = \hat{\rho}_{r,s}\ldots \hat{\rho}_{r,s}$ -- $k$ times which is equal to $({\hat{\rho}_{r,s}})^k$. By equation(\ref{rhoDFT}), we have $\hat{\rho}_{r,s} =c_{r,s}$. Therefore $\widehat{N_{r,s}} = \left(c_{r,s}\right)^k$. By (\ref{f_crs}), we have
\begin{eqnarray*}
 N_{r,s}(n,k) &=& \frac{1}{r^s}\sum\limits_{d|r} (c_{r,s}(d^s))^k c_{\frac{r}{d},s}(n)\\
 &=& \frac{1}{r^s}\sum\limits_{d|r} \left(c_{r,s}(\left(\frac{r}{d}\right)^s)\right)^k c_{d,s}(n).
\end{eqnarray*}

\end{proof}
\end{thm}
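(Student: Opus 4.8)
The plan is to count the solutions of $x_1 + \cdots + x_k \equiv n \pmod{r^s}$ with each $(x_i, r^s)_s = 1$ by encoding the restriction through the indicator function $\rho_{r,s}$ and the sum-of-residues structure through the Cauchy convolution $\otimes$. First I would observe that a tuple $(x_1,\dots,x_k)$ with $1 \le x_i \le r^s$ contributes to $N_{r,s}(n,k)$ exactly when $\rho_{r,s}(x_i) = 1$ for every $i$ and $x_1 + \cdots + x_k \equiv n \pmod{r^s}$; since all the functions involved are $r^s$-periodic, grouping the sum by the value of $x_1 + \cdots + x_k$ modulo $r^s$ shows that $N_{r,s}(n,k)$ equals the $k$-fold Cauchy convolution $(\rho_{r,s} \otimes \cdots \otimes \rho_{r,s})(n)$. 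This identification is the conceptual heart of the argument and replaces Cohen's orthogonality-relation computation.

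Next I would apply the DFT. Since $\rho_{r,s} \in \Brs$, Theorem \ref{CauchyProdInBrs} gives (by an obvious induction on the number of factors) that the $k$-fold Cauchy convolution again lies in $\Brs$, so $N_{r,s}(\cdot,k) \in \Brs$ and the machinery of Section 3 applies to it. The multiplicativity of the DFT under Cauchy convolution, $\widehat{f \otimes g} = \hat f \hat g$, extends by induction to $\widehat{N_{r,s}} = (\hat\rho_{r,s})^k$, and by \eqref{rhoDFT} we have $\hat\rho_{r,s} = c_{r,s}$, hence $\widehat{N_{r,s}} = (c_{r,s})^k$. Finally, I would feed this into the inversion formula \eqref{f_crs}, namely $f(n) = \frac{1}{r^s}\sum_{d|r}\hat f(d^s) c_{\frac{r}{d},s}(n)$, applied to $f = N_{r,s}(\cdot,k)$, which yields $N_{r,s}(n,k) = \frac{1}{r^s}\sum_{d|r}(c_{r,s}(d^s))^k c_{\frac{r}{d},s}(n)$; reindexing the sum by $d \mapsto r/d$ over the divisors of $r$ puts it in the stated form.

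The step I expect to be the main obstacle — or at least the one needing the most care — is justifying the very first identification $N_{r,s}(n,k) = (\rho_{r,s} \otimes \cdots \otimes \rho_{r,s})(n)$. One must be careful that the Cauchy convolution as defined, $(f \otimes g)(n) = \sum_{k \pmod{r^s}} f(k) g(n-k)$, correctly ranges over all residues and that iterating it produces a sum over all $(x_1, \dots, x_k)$ with $x_1 + \cdots + x_k \equiv n$ without over- or under-counting; this is a routine but slightly fussy unwinding of the definition, relying on the $r^s$-periodicity of $\rho_{r,s}$ so that shifting the summation index is harmless. Everything after that is a mechanical transfer through results already established in the excerpt, so there is no genuine analytic difficulty.
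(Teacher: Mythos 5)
Your proposal is correct and follows essentially the same route as the paper's own proof: identifying $N_{r,s}(n,k)$ with the $k$-fold Cauchy convolution of $\rho_{r,s}$, invoking Theorem \ref{CauchyProdInBrs} and $\hat{\rho}_{r,s}=c_{r,s}$ to get $\widehat{N_{r,s}}=(c_{r,s})^k$, and finishing with the inversion formula \eqref{f_crs} and the reindexing $d\mapsto r/d$. Your added care about iterating the Cauchy convolution and the induction on the number of factors only makes explicit what the paper leaves implicit.
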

Cohen gave an inversion formula  for $r$-even functions in \cite[Theorem 3]{cohen1958representations}. We give an analogous formula for $(r,s)$-even  functions in the next theorem.
\begin{thm}[Inversion formula for $(r,s)$-even  functions]
 Let $f,g$ be $(r,s)$-even  functions with $f$ defined as 
 \begin{equation}
  f(n)=\sum\limits_{d|r}g(d)c_{d,s}(n).
 \end{equation}
Then 
\begin{equation}
 g(m)=\frac{1}{r^s}\sum\limits_{d|r}f(\frac{r^s}{d^s}) c_{d,s}(n)\text{ where } m^s = \frac{r^s}{(n,r^s)_s}.
\end{equation}
\begin{proof}
 Let $G(n)=g( m)$. then $G$ is $(r,s)$-even  and so
 \begin{eqnarray*}
  \hat{G}(n) &=& \sum\limits_{d|r}G(d^s)c_{\frac{r}{d},s}(n)\\
  &=& \sum\limits_{d|r}G(r^s/d^s)c_{d,s}(n)\\
  &=& \sum\limits_{d|r}g(d)c_{d,s}(n)\\
  &=&f(n).
 \end{eqnarray*}
So  $r^s G(n) = \hat{\hat{G}}(n) =\hat{f}(n) = \sum\limits_{d|r} f(r^s/d^s)c_{d,s}(n)$ and the theorem follows.
 
\end{proof}

\end{thm}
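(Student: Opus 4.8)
The plan is to reuse the DFT toolkit built in Sections~3 and~4 for $(r,s)$-even functions, namely the transform formula (\ref{fhat_crs}), which says $\hat{h}(n)=\sum_{d|r}h(d^s)c_{r/d,s}(n)$ for $h\in\Brs$, together with the involution identity $\hat{\hat{h}}=r^sh$. The idea is to package the left-hand side $g(m)$ as a single $(r,s)$-even function whose DFT turns out to be $f$, and then apply the involution once more.

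First I would set $G(n)=g(m)$ with $m^s=r^s/(n,r^s)_s$, and check that this is a legitimate $(r,s)$-even function. The key observation is that $m$ is actually a positive divisor of $r$: if $(n,r^s)_s=d^s$ for some $d|r$, then $m^s=(r/d)^s$, so $m=r/d$. Moreover $G(n)$ depends on $n$ only through $(n,r^s)_s$, and since $((n,r^s)_s,r^s)_s=(n,r^s)_s$ we get $G((n,r^s)_s)=G(n)$, that is $G\in\Brs$; hence every formula for DFTs of $(r,s)$-even functions applies to it.

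Second I would compute $\hat{G}$ and recognize it as $f$. Applying (\ref{fhat_crs}) to $G$ gives $\hat{G}(n)=\sum_{d|r}G(d^s)c_{r/d,s}(n)$, and here $G(d^s)=g(r/d)$ because $(d^s,r^s)_s=d^s$ forces $m^s=r^s/d^s=(r/d)^s$. Re-indexing the sum by $e=r/d$ (which also runs over the divisors of $r$) turns this into $\sum_{e|r}g(e)c_{e,s}(n)$, which is exactly $f(n)$ by the hypothesis on $f$. So $\hat{G}=f$.

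Third I would invert: from $\hat{\hat{G}}=r^sG$ we obtain $r^sG(n)=\hat{f}(n)$, and (\ref{fhat_crs}) applied to $f$, followed by the same substitution $e=r/d$, gives $\hat{f}(n)=\sum_{d|r}f((r/d)^s)c_{r/d,s}(n)=\sum_{d|r}f(r^s/d^s)c_{d,s}(n)$. Dividing by $r^s$ and unwinding $G(n)=g(m)$ produces the claimed identity. None of these steps is long; the only points needing care are the well-definedness of $m$ (hence of $G$) as a function of $(n,r^s)_s$ and keeping the Ramanujan-sum subscripts and arguments straight through the two bookkeeping substitutions $e=r/d$. I do not expect a genuine obstacle: the theorem is really the assertion that the basis $\{c_{d,s}:d|r\}$ of $\Brs$ is, up to the reflection $d\mapsto r/d$ and the scalar $r^s$, self-dual under the DFT, which is precisely what (\ref{fhat_crs}) together with the involution encode.
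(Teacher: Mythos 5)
Your proposal is correct and follows essentially the same route as the paper: define $G(n)=g(m)$, verify $G\in\Brs$, show $\hat{G}=f$ by the substitution $d\mapsto r/d$ in the expansion $\hat{G}(n)=\sum_{d|r}G(d^s)c_{r/d,s}(n)$, and then apply $\hat{\hat{G}}=r^sG$ to conclude. You also make explicit two details the paper leaves implicit (that $m$ is a divisor of $r$, hence $G$ is well defined, and that $G(d^s)=g(r/d)$), which is a welcome addition but not a different method.
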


Following result is a consequence of the Parseval formula (\ref{parseval}) and DFT. 
\begin{prop}
 If $f\in\Brs$, then $$\sum\limits_{n=1}^{r^s}|\hat{f}(n)|^2 = r^s \sum\limits_{d|r} |f(d^s)|^2 J_s(r^s/d^s).$$
 \begin{proof}
  By Parseval formula, 
  \begin{eqnarray*}
 \sum\limits_{n=1}^{r^s}|\hat{f}(n)| ^2 &=& r^s \sum\limits_{n=1}^{r^s}|f(n)|^2 \\
 &=& r^s \sum\limits_{n=1}^{r^s}|f((n,r^s)_s)|^2 \text{ since $f$ is $(r,s)$-even }\\
 &=& r^s \sum\limits_{(n,r^s)_s = d^s}|f(d^s)|^2 \times \text{number of $n$ with $1\leq n\leq r^s$ and }(n,r^s)_s = d^s \\
  &=& r^s \sum\limits_{d|r}|f(d^s)|^2 J_s(r^s/d^s).
\end{eqnarray*}
 \end{proof}

\end{prop}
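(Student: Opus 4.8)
The plan is to combine the Parseval identity (\ref{parseval}) with the defining property of $(r,s)$-even functions, so that essentially no new machinery is needed. Since $f\in\Brs\subset\Drs$, Parseval applies verbatim and gives
\begin{equation*}
\sum_{n=1}^{r^s}|\hat f(n)|^2 = r^s\sum_{n=1}^{r^s}|f(n)|^2 ,
\end{equation*}
so the whole problem reduces to rewriting $\sum_{n=1}^{r^s}|f(n)|^2$ in terms of the values $f(d^s)$ with $d\mid r$, which are the only values $f$ attains.

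Next I would partition the index set $\{1,\dots,r^s\}$ according to the value of $(n,r^s)_s$. By definition $(n,r^s)_s$ is the largest $s$-th power dividing both $n$ and $r^s$, hence is of the form $d^s$ with $d\mid r$, and every such $d^s$ occurs. On the block where $(n,r^s)_s=d^s$, the $(r,s)$-evenness of $f$ gives $f(n)=f((n,r^s)_s)=f(d^s)$, so
\begin{equation*}
\sum_{n=1}^{r^s}|f(n)|^2 = \sum_{d\mid r}|f(d^s)|^2\,\#\{\,1\le n\le r^s : (n,r^s)_s=d^s\,\}.
\end{equation*}

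The only substantive step left is the counting identity $\#\{\,1\le n\le r^s : (n,r^s)_s=d^s\,\} = J_s(r^s/d^s)$. To prove it, note that $(n,r^s)_s=d^s$ forces $d^s\mid n$, so write $n=d^s m$; then $n$ runs over $[1,r^s]$ exactly when $m$ runs over $[1,(r/d)^s]$, and the scaling property $(d^s m,\,r^s)_s = d^s\,(m,\,(r/d)^s)_s$ (valid for $d\mid r$) shows that $(n,r^s)_s=d^s$ is equivalent to $(m,(r/d)^s)_s=1$. By the definition of the Jordan totient $J_s$, the number of such $m$ is $J_s((r/d)^s)=J_s(r^s/d^s)$. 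Substituting back into the two displays above yields the claimed formula.

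I expect the only friction to be the elementary verification of the scaling identity $(d^s m, r^s)_s = d^s\,(m,(r/d)^s)_s$ for $d\mid r$; once that is granted, everything else is bookkeeping layered on top of the already-established Parseval identity, and the proof is short.
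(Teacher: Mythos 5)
Your proof is correct and follows essentially the same route as the paper: apply Parseval, use $(r,s)$-evenness to replace $f(n)$ by $f((n,r^s)_s)$, partition $\{1,\dots,r^s\}$ by the value of $(n,r^s)_s$, and identify the block sizes with $J_s(r^s/d^s)$. The only difference is that you supply the scaling argument $(d^s m, r^s)_s = d^s(m,(r/d)^s)_s$ to justify the counting step, which the paper simply asserts.
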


\begin{cor}
 $$\sum\limits_{n=1}^{r^s} (c_{r,s}(n))^2 = r^sJ_s(r^s).$$
 \begin{proof}
  In the above propositiion, let $f=\rho_{r,s}$. Then $\hat{f}=c_{r,s}$. Since $c_{r,s}$ is an integer, by the above proposition 
  $$\sum\limits_{n=1}^{r^s} (c_{r,s}(n))^2 =r^s \sum\limits_{d|r}\rho_{r,s}(d^s)^2 J_s(r^s/d^s).$$
  In this summation, $\rho_{r,s}(d^s)$ is nonzero only if $(d^s,r^s)_s=1$ and in that case $d=1$. But $\rho_{r,s}(1)=1$. So the right side becomes $r^sJ_s(r^s)$.
 \end{proof}

 \end{cor}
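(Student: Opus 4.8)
The plan is to recognize the corollary as the special case $f=\rho_{r,s}$ of the immediately preceding proposition, so almost all the work has already been done; what remains is to verify that the hypotheses apply and to evaluate the resulting sum.

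First I would recall from equation (\ref{rhoDFT}) that $\hat{\rho}_{r,s}=c_{r,s}$, so that $|\hat{\rho}_{r,s}(n)|^2=(c_{r,s}(n))^2$ — here I am using that $c_{r,s}(n)$ is real, indeed an integer, which is visible from the divisor-sum formula (\ref{eq:rs-gen-mu}). Since $\rho_{r,s}\in\Brs$ (established in Section 4), the preceding proposition applies verbatim and yields
\begin{equation*}
 \sum\limits_{n=1}^{r^s}(c_{r,s}(n))^2 = r^s\sum\limits_{d|r}(\rho_{r,s}(d^s))^2 J_s(r^s/d^s).
\end{equation*}

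Next I would collapse the right-hand sum. By the very definition of $\rho_{r,s}$, the term $\rho_{r,s}(d^s)$ is nonzero precisely when $(d^s,r^s)_s=1$; for a divisor $d\mid r$ this forces $d=1$, and then $\rho_{r,s}(1)=1$. Hence the only surviving summand is the $d=1$ term, which is $J_s(r^s/1)=J_s(r^s)$, giving the claimed equality $\sum_{n=1}^{r^s}(c_{r,s}(n))^2 = r^s J_s(r^s)$.

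There is essentially no obstacle here: the corollary is a direct application of the proposition with a one-line evaluation of a degenerate sum. The only point requiring a word of care is the reality/integrality of $c_{r,s}$, so that $(c_{r,s}(n))^2$ genuinely equals $|\hat{\rho}_{r,s}(n)|^2$; this is immediate from (\ref{eq:rs-gen-mu}), so I would simply note it in passing.
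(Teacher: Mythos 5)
Your proposal is correct and follows the paper's proof essentially verbatim: apply the preceding Parseval-type proposition to $f=\rho_{r,s}$, use $\hat{\rho}_{r,s}=c_{r,s}$, and observe that only the $d=1$ term survives in the divisor sum. The only (welcome) addition is that you explicitly justify the integrality of $c_{r,s}$ via the divisor-sum formula, which the paper merely asserts.
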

\section{DFT of multiplicative $(r,s)$-even  functions}
Recall that an arithmetic function $f$ of a single variable is multiplicative if $f(mn)=f(m)f(n)$ for every $m,n\in\N$ with $(m,n)=1$. It is completely multiplicative if $f(mn)=f(m)f(n)$ for every $m,n\in\N$. A function $f$ of two variables is multiplicative if $f(n_1 n_2,r_1 r_2)=f((n_1,r_1)) f((n_2,r_2))$ whenever $(n_1,n_2)=(n_1,r_2) = (n_2,r_1)=(r_1,r_2) = 1$. (This GCD condition is equivalent to writing $(n_1 r_1, n_2 r_2)=1$.)

 By Theorem (1) in \cite{cohen1949extension},  $c_{r,s}$ is multiplicative in $r$. It follows from the next theorem that $c_{r,s}(n)$ is multiplicative as a function of two variables $r$ and $n$.
 \begin{thm}\label{multsequence}
  Let $(f_r)_{r\in\N}$ be a sequence of functions. Assume that $f_r\in\Brs$ and that $r\rightarrow f_r(n)$ is multiplicative in $r$. Then $f_r(n)$ viewed as a function of two variables $f((n,r)):\N^2\rightarrow \C$ is multiplicative.

  \begin{proof}
   Assume that $(m,n) = (m,r) = (n,q) = (q,r) = 1$ for some positive integers $m,n,q,r$. Then
   \begin{eqnarray*}
    f_{qr}(mn) &=& f_q(mn) f_r(mn)\\
    &=& f_q((mn,q^s)_s) f_r((mn,r^s)_s)\\
    &=& f_q((m,q^s)_s) f_r((n,r^s)_s)\\
    &=& f_q(m) f_r(n).
   \end{eqnarray*}
  \end{proof} 
    \end{thm}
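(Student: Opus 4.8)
The plan is to unwind both hypotheses — membership in $\Brs$ and multiplicativity in the index $r$ — directly against the definition of a two-variable multiplicative function, exactly as the two-variable multiplicativity is phrased in the paragraph preceding the theorem. First I would fix positive integers $m,n,q,r$ satisfying the coprimality hypotheses $(m,n)=(m,r)=(n,q)=(q,r)=1$; as the paper notes, this is the same as $(mq,nr)=1$, so it is precisely the input condition for testing multiplicativity of the two-variable function $(x,y)\mapsto f_y((x,y^s)_s)$ at the pair $\big((m,q),(n,r)\big)$. The goal is then to show $f_{qr}(mn)=f_q(m)\,f_r(n)$.

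The argument proceeds in four short steps. Step one: since $(q,r)=1$ and $t\mapsto f_t(mn)$ is multiplicative in the subscript, $f_{qr}(mn)=f_q(mn)f_r(mn)$. Step two: apply the $(r,s)$-even property of $f_q$ and of $f_r$ (both lie in the appropriate $\Brs$-type space) to rewrite $f_q(mn)=f_q((mn,q^s)_s)$ and $f_r(mn)=f_r((mn,r^s)_s)$. Step three — the crux — simplify the generalized gcds: because $(n,q)=1$ we should have $(mn,q^s)_s=(m,q^s)_s$, and because $(m,r)=1$ we should have $(mn,r^s)_s=(n,r^s)_s$. Step four: reverse the $(r,s)$-even reduction to recover $f_q((m,q^s)_s)=f_q(m)$ and $f_r((n,r^s)_s)=f_r(n)$, completing the chain.

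The main obstacle is Step three: I must justify that coprimality of $n$ and $q$ forces $(mn,q^s)_s=(m,q^s)_s$. The key fact is that $(n,q)=1$ implies $(n^s,q^s)_s=1$ — indeed a prime $p$ dividing $\gcd(n^s,q^s)$ would divide both $n$ and $q$ — and hence the largest $s$-th power dividing $q^s$ and $mn$ cannot absorb any prime factor coming from $n$; it must already divide $m$. I would phrase this via prime factorizations: if $\ell^s \mid q^s$ and $\ell^s \mid mn$, then since $\ell \mid q$ we have $(\ell,n)=1$, so $\ell^s$ (being coprime to $n$) divides $m$, giving $\ell^s\mid (m,q^s)_s$; the reverse inclusion is trivial. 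The symmetric statement $(mn,r^s)_s=(n,r^s)_s$ follows the same way using $(m,r)=1$. Everything else is bookkeeping, and the displayed four-line computation in the theorem statement is exactly this chain made explicit.
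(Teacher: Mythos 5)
Your proposal is correct and follows essentially the same route as the paper: use multiplicativity in the index to split $f_{qr}(mn)=f_q(mn)f_r(mn)$, apply $(r,s)$-evenness, simplify the generalized gcds via the coprimality hypotheses, and undo the evenness step. Your explicit justification of the gcd identity $(mn,q^s)_s=(m,q^s)_s$ is a detail the paper leaves implicit, but the argument is the same.
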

Every multiplicative function of two variables $n$ and $r$ satisfies the quasi multiplicative property $f((n,r)) f((n',r)) = f((1,r)) f(nn',r))$ whenever $(n,n') = 1$. This statement has been proved by many authors. See \cite[Theorem 65]{sivaramakrishnan1988classical} for example.
Since every $f\in\Brs$ is multiplicative in two variables, the following result is immediate.
\begin{cor}
 Let $f_r\in\Brs$. Then $f_r(m) f_r(n) = f_r(1) f_r(mn)$ for $(m,n)=1$. In particular, $f_r(n)$ is multiplicative as a function of the variable $n$ if and only if $f_r(1)=1$.
\end{cor}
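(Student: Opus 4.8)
The plan is to obtain the corollary directly from the two-variable multiplicativity recorded just above together with the quasi-multiplicative identity cited from \cite[Theorem 65]{sivaramakrishnan1988classical}. First I would make precise the passage from $\Brs$ to a function of two variables: regard the given $f_r$ as arising from a multiplicative function $f$ of two variables with $f(n,r)=f_r((n,r^s)_s)=f_r(n)$. With this identification in hand, the quasi-multiplicative property of two-variable multiplicative functions, applied at the fixed second argument $r$, reads $f(m,r)\,f(n,r)=f(1,r)\,f(mn,r)$ whenever $(m,n)=1$; unwinding the identification this is exactly $f_r(m)f_r(n)=f_r(1)f_r(mn)$, which is the first assertion.

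For the ``in particular'' clause I would argue both directions from that relation. If $f_r(1)=1$, it becomes $f_r(mn)=f_r(m)f_r(n)$ for $(m,n)=1$, which together with $f_r(1)=1$ is precisely the statement that $n\mapsto f_r(n)$ is multiplicative. Conversely, if $n\mapsto f_r(n)$ is multiplicative then it is not identically zero, so choosing an $n$ with $f_r(n)\neq 0$ and using $f_r(n)=f_r(n\cdot 1)=f_r(n)f_r(1)$ forces $f_r(1)=1$.

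If instead one wants a self-contained argument not routed through the two-variable formalism, I would prove the first assertion directly. Using that $f_r$ is $(r,s)$-even, one reduces $f_r(m)f_r(n)=f_r(1)f_r(mn)$ to $f_r(a^s)f_r(b^s)=f_r(1)f_r((ab)^s)$ for coprime divisors $a,b\mid r$; this step needs the elementary fact that $(mn,r^s)_s=(m,r^s)_s\,(n,r^s)_s$ when $(m,n)=1$, which follows by comparing $p$-adic valuations. Then one factors $r=uwz$ into pairwise coprime parts with $u$ supported on the primes of $a$, $w$ on the primes of $b$, and $z$ on the remaining primes of $r$, expands each of $f_r(a^s),f_r(b^s),f_r((ab)^s),f_r(1)$ via multiplicativity in $r$, and uses the $(r,s)$-evenness of the factors $f_u,f_w,f_z$; both sides then collapse to $f_u(a^s)f_u(1)f_w(b^s)f_w(1)f_z(1)^2$.

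I do not expect a serious obstacle here: the content sits in Theorem \ref{multsequence} and in the cited quasi-multiplicative identity, and the corollary is essentially bookkeeping on top of them. The one point meriting care — which I would treat as the main obstacle — is making the identification of an element of $\Brs$ with a genuine two-variable multiplicative function watertight (or, in the alternative route, carrying out the coprime-factorization bookkeeping cleanly), since that is exactly where the multiplicativity hypothesis is actually used.
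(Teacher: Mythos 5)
Your main route is exactly the paper's: the corollary is obtained immediately from Theorem \ref{multsequence} together with the quasi-multiplicative property of two-variable multiplicative functions cited from \cite[Theorem 65]{sivaramakrishnan1988classical}, and your two-way argument for the ``in particular'' clause is the standard filling-in of what the paper leaves implicit. The only caveat --- shared equally by the paper --- is that the identity genuinely requires the hypothesis of Theorem \ref{multsequence} that $r\mapsto f_r(n)$ is multiplicative in $r$ (an arbitrary single $(r,s)$-even function need not satisfy $f_r(m)f_r(n)=f_r(1)f_r(mn)$), and your self-contained alternative correctly uses that same hypothesis, so it is a valid but optional extra rather than a different proof.
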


Next we prove that DFT of an $(r,s)$-even  multiplicative function is multiplicative.
\begin{thm}\label{multsequencedft}
 Let $f_r\in\Brs$ for $r\in\N$. 
 \begin{enumerate}
  \item If $r\rightarrow f_r(n)$ is multiplicative, then $r\rightarrow \hat{f}_r(n)$ is multiplicative.
  \item $\hat{f}:\N^2\rightarrow \C$ is multiplicative as a function of 2 variables.
 \end{enumerate}
 
 \begin{proof}
 \begin{enumerate}
  \item 
  Let $(q,r)=1$. By definition of DFT, we have 
  \begin{eqnarray*}
   \hat{f}_{qr}(n) &=& \sum_{d|qr}f_{qr}(d^s)c_{\frac{qr}{d},s}(n)\\
   &=& \sum_{a|q,b|r}f_{qr}(a^s b^s)c_{\frac{q}{a}\frac{r}{b},s}(n)\text{ where } d=ab, (a,b)=1\\
   &=& \sum_{a|q,b|r}f_{q}(a^s b^s) f_{r}(a^s b^s) c_{\frac{q}{a}\frac{r}{b},s}(n)\\
   &=& \sum_{a|q,b|r}f_{q}((a^s b^s,q^s)_s) f_{r}((a^s b^s,r^s)_s) c_{\frac{q}{a}\frac{r}{b},s}(n)\\
   &=& \sum_{a|q,b|r}f_{q}((a^s,q^s)_s) f_{r}((b^s,r^s)_s) c_{\frac{q}{a}\frac{r}{b},s}(n)\\
   &=& \sum_{a|q,b|r}f_{q}(a^s) f_{r}(b^s) c_{\frac{q}{a}\frac{r}{b},s}(n)\\
   &=& \sum_{a|q,b|r}f_{q}(a^s) f_{r}(b^s) c_{\frac{q}{a},s}(n) c_{\frac{r}{b},s}(n)\\
   &=& \sum_{a|q}f_{q}(a^s)  c_{\frac{q}{a},s}(n) \times \sum_{b|r} f_{r}(b^s)  c_{\frac{r}{b},s}(n)\\
   &=& \hat{f}_q(n) \hat{f}_r(n).
     \end{eqnarray*}
\item Note that $r\rightarrow\hat{f}_r(n)$ is multiplicative in $r$ and $\hat{f}_r\in\Brs$. Then the statement follows from theorem (\ref{multsequence}).
\end{enumerate}
 \end{proof}

\end{thm}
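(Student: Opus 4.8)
The plan is to prove part (1) by a direct computation starting from the formula $\hat{f}_r(n)=\sum_{d\mid r}f_r(d^s)\,c_{\frac{r}{d},s}(n)$ recorded in equation~(\ref{fhat_crs}), and then to deduce part (2) immediately from Theorem~\ref{multsequence}.

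For part (1), I would fix $n$ and take coprime positive integers $q,r$. Since $(q,r)=1$, every divisor $d$ of $qr$ factors uniquely as $d=ab$ with $a\mid q$, $b\mid r$ (and then $(a,b)=1$), and moreover $\frac{qr}{d}=\frac{q}{a}\cdot\frac{r}{b}$ with $\left(\frac{q}{a},\frac{r}{b}\right)=1$; so $\hat{f}_{qr}(n)=\sum_{a\mid q}\sum_{b\mid r}f_{qr}((ab)^s)\,c_{\frac{q}{a}\frac{r}{b},s}(n)$. Then I would simplify the general term in three steps. First, the hypothesized multiplicativity of $r\mapsto f_r(n)$ (applicable because $(q,r)=1$) gives $f_{qr}((ab)^s)=f_q((ab)^s)\,f_r((ab)^s)$. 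Second, since $f_q$ is $(q,s)$-even and $(b,q)=1$ forces $(b^s,q^s)_s=1$, one has $f_q((ab)^s)=f_q(((ab)^s,q^s)_s)=f_q((a^s,q^s)_s)=f_q(a^s)$, and symmetrically $f_r((ab)^s)=f_r(b^s)$. Third, the multiplicativity of $c_{\bullet,s}$ in its first argument (Theorem~1 of \cite{cohen1949extension}), applied to the coprime pair $\frac{q}{a},\frac{r}{b}$, yields $c_{\frac{q}{a}\frac{r}{b},s}(n)=c_{\frac{q}{a},s}(n)\,c_{\frac{r}{b},s}(n)$. The general term thus factors as $\bigl(f_q(a^s)\,c_{\frac{q}{a},s}(n)\bigr)\bigl(f_r(b^s)\,c_{\frac{r}{b},s}(n)\bigr)$, the double sum splits as a product of two single sums, and each single sum is exactly $\hat{f}_q(n)$, respectively $\hat{f}_r(n)$, by~(\ref{fhat_crs}) once more.

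For part (2), I would observe that $\hat{f}_r$ is itself $(r,s)$-even (as already noted just after~(\ref{fhat_crs})) and that $r\mapsto\hat{f}_r(n)$ is multiplicative by part (1). Hence the sequence $(\hat{f}_r)_{r\in\N}$ satisfies the hypotheses of Theorem~\ref{multsequence}, and that theorem gives at once that $\hat{f}\colon\N^2\to\C$ is multiplicative as a function of two variables.

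The only point needing care is the bookkeeping in part (1): the coprimality hypothesis $(q,r)=1$ is invoked three separate times — to factor divisors of $qr$, to split $f_{qr}$ as $f_q f_r$, and to reduce the generalized gcds $((ab)^s,q^s)_s$ and $((ab)^s,r^s)_s$ — and one must track which coprimality relation (of $q$ and $r$, of $a$ and $b$, of $b$ and $q$, etc.) justifies each manipulation. Beyond this, I do not expect a genuine obstacle: the whole argument rests on elementary properties of the generalized gcd $(\cdot,\cdot)_s$ together with the already-cited multiplicativity of $c_{\bullet,s}$ in its first variable.
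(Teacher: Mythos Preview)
Your proposal is correct and follows essentially the same approach as the paper's own proof: the same formula~(\ref{fhat_crs}) is used as the starting point, the divisor sum over $qr$ is factored via $d=ab$ with $a\mid q$, $b\mid r$, and the general term is simplified using exactly the three ingredients you list (multiplicativity of $r\mapsto f_r(n)$, $(r,s)$-evenness to strip the extraneous factor from each argument, and multiplicativity of $c_{\bullet,s}$ in its lower index), after which part~(2) is deduced from Theorem~\ref{multsequence}. Your write-up is in fact slightly more explicit than the paper's about which coprimality relation justifies each step, but the argument is the same.
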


The following definition generalizes the completely $r$-even functions given by Cohen in \cite{cohen1958representations}.
 \begin{defn}
A sequence of arithmetic functions $(f_r)_{r\in\N}$ is said to be completely $(r,s)$-even if there exists an $F\in\mathcal{F}$ such that $f_r(n)=F((n,r^s)_s)$ for all $r\in\N$.
 \end{defn}
It can be easily seen that $f_r$ is therefore $(r,s)$-even . Note that $(n,r^s)_s$ is multiplicative in $n$ as well as $r$ independently.  So if $F$ is multiplicative as a single variable function, then $f_r(n)=f(n,r)$ is multiplicative in $n$ and $r$ independently. We list some more properties of the completely $(r,s)$-even functions in the next result.
\begin{thm}
 Let $(f_r)_{r\in\N}$ be a sequence of functions. Assume that 
 $(f_r)_{r\in\N}$ is completely $(r,s)$-even with $f_r(n)=F((n,r^s)_s)$ for some multiplicative function $F\in\mathcal{F}$. Then
\begin{enumerate}
 \item $f_r(n)$ is multiplicative in $r$.
 \item $f_r(n)$ is multiplicative in $n$.
 \item $f_r\in\Brs$.
 \item $f_r(n)=f(n,r)$ is multiplicative as a function of two variables.
 \item $r\rightarrow \hat{f}_r(n)$ is multiplicative.
 \item $n\rightarrow \hat{f}_r(n)$ is multiplicative if and only if $\hat{f}_r(1)=1$.
\end{enumerate}
\begin{proof}
 We have already seen that the statements (1),(2), and (3) are true. Now (4) follows from Theorem (\ref{multsequence}) and (5) follows from Theorem(\ref{multsequencedft}). To see (6), note that  $\hat{f}(n) = \sum\limits_{d|r} f(d^s)c_{\frac{r}{d},s}(n)$ and $\hat{f}_r(m) \hat{f}_r(n)  = \hat{f}_r(1)  \hat{f}_r(mn) $ for any $m,n$ with $(m,n)=1$ and so $\hat{f}_r$ is multiplicative if and only if $\hat{f}_r(1) =1$. 

\end{proof}
\end{thm}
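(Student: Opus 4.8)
The plan is to verify the six claims by assembling the machinery built up in the preceding sections, since each is essentially a corollary of a theorem already proved. First I would dispose of (1), (2), (3): since $f_r(n)=F((n,r^s)_s)$ with $F$ multiplicative, and $(n,r^s)_s$ is itself multiplicative in $r$ (for fixed $n$) and in $n$ (for fixed $r$), the composites $r\mapsto F((n,r^s)_s)$ and $n\mapsto F((n,r^s)_s)$ are multiplicative, giving (1) and (2); and $(r,s)$-evenness (statement (3)) is immediate from $((n,r^s)_s,r^s)_s=(n,r^s)_s$, which forces $f_r((n,r^s)_s)=F(((n,r^s)_s,r^s)_s)=F((n,r^s)_s)=f_r(n)$. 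These are the routine ``already seen'' parts and I would state them briefly.

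Next, (4) follows directly from Theorem \ref{multsequence}: by (1), $r\mapsto f_r(n)$ is multiplicative, and by (3), $f_r\in\Brs$, so the hypotheses of that theorem are met and $f_r(n)$ is multiplicative as a function of two variables. Then (5) follows directly from Theorem \ref{multsequencedft}(1): the hypotheses there are exactly ``$f_r\in\Brs$ and $r\mapsto f_r(n)$ multiplicative,'' which we have, so $r\mapsto\hat f_r(n)$ is multiplicative.

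For (6), I would argue as in the Corollary following Theorem \ref{multsequencedft}: by Theorem \ref{multsequencedft}, $\hat f_r\in\Brs$ (this needs a word — $\hat f_r$ is $(r,s)$-even because, as shown in Section 3, the DFT of an $(r,s)$-even function is again $(r,s)$-even), and $r\mapsto\hat f_r(n)$ is multiplicative, hence by Theorem \ref{multsequence} the function $\hat f_r(n)$ is multiplicative in two variables, so the quasi-multiplicative identity $\hat f_r(m)\hat f_r(n)=\hat f_r(1)\hat f_r(mn)$ holds for $(m,n)=1$. From this, $\hat f_r$ being multiplicative in $n$ (which in particular requires $\hat f_r(1)=1$ since $\hat f_r(1\cdot 1)=\hat f_r(1)^2$) is equivalent to $\hat f_r(1)=1$: if $\hat f_r(1)=1$ the quasi-multiplicative identity becomes full multiplicativity, and conversely full multiplicativity at $m=n=1$ forces $\hat f_r(1)=1$.

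I do not expect any serious obstacle here; the whole result is a bookkeeping exercise that collects Theorems \ref{multsequence} and \ref{multsequencedft} and the Corollary. The only point requiring a little care is making sure the hypotheses of the cited theorems are genuinely in place at each invocation — in particular that $\hat f_r\in\Brs$ before applying Theorem \ref{multsequence} in step (6), and that one explicitly observes the equivalence in (6) rather than merely asserting it. Everything else is immediate from the completely-$(r,s)$-even structure.
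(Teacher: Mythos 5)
Your proposal is correct and follows essentially the same route as the paper: (1)--(3) from the multiplicativity of $(n,r^s)_s$ in each variable and the identity $((n,r^s)_s,r^s)_s=(n,r^s)_s$, (4) and (5) by citing Theorems \ref{multsequence} and \ref{multsequencedft}, and (6) via the quasi-multiplicative identity $\hat f_r(m)\hat f_r(n)=\hat f_r(1)\hat f_r(mn)$ applied to $\hat f_r\in\Brs$. You merely spell out a few steps (notably that $\hat f_r$ is again $(r,s)$-even) that the paper leaves implicit.
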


\section{A generalization of H\"{o}lder's identity}
For the classical Ramanujan sum $c_r(n)$, O. H\"{o}lder proved \cite{holder1936theorie} the identity $$c_r(n)=\frac{\phi(r)\mu(m)}{\phi(m)}$$ where $m=\frac{r}{(n,r)}$. Generalizing this, Cohen proved in \cite{cohen1956extensionof} that the generalized Ramanujan sum  $c_{r,s}(n)$ satisfies
\begin{equation}\label{ghi}
 c_{r,s}(n) = \frac{J_s(r)\mu(m)}{J_s(m)}\text{ where } m^s = \frac{r^s}{(n,r^s)_s}.
\end{equation}
We here give a generalization of this identity to a particular class of functions. Recall that a multiplicative function $f$ is strongly multiplicative if $f(p^a)=f(p)$ for any prime $p$ and  $a > 0$. We prove 
\begin{thm}\label{ggholder}
 Let $(f_r)_{r\in\N}$ be a sequence of complete $s$-even functions with $f_r(n)=F((n,r^s)_s)$ for some strongly multiplicative function $F$. Suppose  that $F(p)\neq 1-p^s$ for any prime $p$. Then 
 \begin{equation}\label{ggholderidentity}
  \hat{f}_r(n) = \frac{(F*\mu)(m) (F* J_s) (r)}{(F* J_s)(m)}\text{ where } m^s=\frac{r^s}{(n,r^s)_s}.
 \end{equation}

\end{thm}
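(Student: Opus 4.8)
The plan is to reduce the identity (\ref{ggholderidentity}) to the case of prime‑power $r$ and verify it there by a direct computation using the H\"older identity (\ref{ghi}) that is already at our disposal.

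\emph{Rewriting the left side.} Since $f_r\in\Brs$, formula (\ref{fhat_crs}) gives $\hat f_r(n)=\sum_{d\mid r}f_r(d^s)c_{r/d,s}(n)$. For $d\mid r$ one has $(d^s,r^s)_s=d^s$, so $f_r(d^s)=F(d^s)$, and strong multiplicativity of $F$ forces $F(d^s)=\prod_{p\mid d}F(p)=F(d)$. So I would start from
\begin{equation*}
\hat f_r(n)=\sum_{d\mid r}F(d)\,c_{r/d,s}(n).
\end{equation*}

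\emph{Multiplicativity and reduction to prime powers.} Fix $n$. Because $(n,r^s)_s$ is multiplicative in $r$, the map $r\mapsto f_r(n)=F((n,r^s)_s)$ is multiplicative, so Theorem \ref{multsequencedft}(1) makes $r\mapsto\hat f_r(n)$ multiplicative in $r$. Writing $m=m(r,n)$ with $m^s=r^s/(n,r^s)_s$, the map $r_1r_2\mapsto m(r_1,n)m(r_2,n)$ is multiplicative for coprime arguments, and $F*\mu$ and $F*J_s$ are multiplicative; hence the right side of (\ref{ggholderidentity}) is multiplicative in $r$ and well defined, as soon as one knows $(F*J_s)(m)\neq0$. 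This is precisely where the hypothesis is used: $(F*J_s)(p^{j})=p^{(j-1)s}\bigl(p^s-1+F(p)\bigr)\neq0$ for $j\ge1$ because $F(p)\neq1-p^s$, so $(F*J_s)(m)=\prod_{p\mid m}(F*J_s)(p^{v_p(m)})\neq0$. It then suffices to prove (\ref{ggholderidentity}) when $r=p^{a}$.

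\emph{The prime-power case.} Fix $n$ and let $b$ be defined by $(n,p^{as})_s=p^{bs}$, so $0\le b\le a$ and $m=p^{a-b}$. Since $(n,p^{cs})_s=p^{\min(b,c)s}$, applying (\ref{ghi}) to each summand $c_{p^{c},s}(n)$ of the displayed formula gives
\begin{equation*}
\hat f_{p^{a}}(n)=\sum_{c=0}^{a}F(p^{a-c})\,\frac{J_s(p^{c})\,\mu(p^{\max(0,\,c-b)})}{J_s(p^{\max(0,\,c-b)})}.
\end{equation*}
Here the terms with $c\ge b+2$ vanish by the M\"obius factor; the term $c=b+1$ (present only when $a-b\ge1$) equals $-F(p^{a-b-1})p^{bs}$ since $J_s(p^{b+1})/J_s(p)=p^{bs}$; and the block $0\le c\le b$ contributes $F(p)\sum_{c=0}^{b}J_s(p^{c})=F(p)\,p^{bs}$ when $a-b\ge1$ (then $a-c\ge1$ forces $F(p^{a-c})=F(p)$, and $\sum_{d\mid p^{b}}J_s(d)=p^{bs}$), while it equals $(F*J_s)(p^{a})$ when $a=b$. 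On the right side strong multiplicativity gives $(F*\mu)(p^{k})=1$, $F(p)-1$, $0$ for $k=0$, $1$, $\ge2$, and $(F*J_s)(p^{a})=p^{(a-1)s}(p^{s}-1+F(p))$ for $a\ge1$. Comparing the two sides in the three regimes $a-b\ge2$ (both are $0$), $a-b=1$ (both are $(F(p)-1)p^{bs}$, after the factor $p^{s}-1+F(p)$ cancels), and $a-b=0$ (both are $(F*J_s)(p^{a})$) completes the prime-power identity, and multiplicativity in $r$ yields (\ref{ggholderidentity}) in general.

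I expect the main obstacle to be the bookkeeping in the prime-power step: keeping track of the exponent $\max(0,c-b)$, splitting cleanly into the regimes $a-b=0,1,\ge2$, and confirming that the right side is genuinely well defined — which is exactly the role of the hypothesis $F(p)\neq1-p^{s}$, ensuring $(F*J_s)(m)\neq0$.
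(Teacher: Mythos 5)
Your proof is correct and follows essentially the same route as the paper: reduce to $r=p^{a}$ by multiplicativity of both sides (via Theorem \ref{multsequencedft}, with the hypothesis $F(p)\neq 1-p^{s}$ guaranteeing $(F*J_s)(m)\neq 0$), then verify the identity by cases on the largest power $p^{bs}$ with $(n,p^{as})_s=p^{bs}$. The only real difference is in the prime-power evaluation: you substitute Cohen's identity (\ref{ghi}) for each $c_{p^{c},s}(n)$ and sum — which is legitimate and not circular, since (\ref{ghi}) is Cohen's prior result rather than a consequence of this theorem — whereas the paper groups the sum as $F(1)c_{p^{a},s}(n)+F(p)\sum_{d\mid p^{a-1}}c_{d,s}(n)$ and invokes Results \ref{cpacases} and \ref{cpsumcases}; the three regimes and the resulting values agree in both treatments.
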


In the proof of the theorem, we need two results established by  Cohen.
\begin{res}\cite[Theorem 3]{cohen1949extension}\label{cpacases}
 If $p$ is a prime and $\lambda>0$, then 
 \begin{equation}
  c_{p^{\lambda},s}(n)=\begin{cases}
                p^{s\lambda} - p^{s(\lambda-1)} &\text{ if }p^{s\lambda}|n\\
                - p^{s(\lambda-1)} &\text{ if }p^{s\lambda}\nmid n, p^{s(\lambda-1)} | n\\
                0 &\text{ otherwise}
               \end{cases}.
 \end{equation}

\end{res}

\begin{res}\cite[Corollary 2.2]{cohen1950extension}\label{cpsumcases}
 \begin{equation}
  \sum\limits_{d|r}c_{d,s}(n) = \begin{cases}
                                r^s &\text{ if } n\equiv 0\,(\text {mod }r^s)\\
                                0 &\text{ otherwise } 
                               \end{cases}.
 \end{equation}

\end{res}

We will also use the fact that the Jordan totient function $J_s$ defined by $$J_s(n)=n^s\prod\limits_{p|n}(1-p^{-s})$$ satisfies (\cite[Corollary 7.1]{cohen1956extensionof}) \begin{equation}
                                \sum\limits_{d|n}J_s(d) = n^s.
                               \end{equation}

We now proceed to prove our theorem.
\begin{proof}[Proof of theorem (\ref{ggholder})]
Note that $r\rightarrow\hat{f}_r$ is multiplicative. Since  $F,\mu$ and  $J_s$ are all multiplicative, the expression on the right side of the  identity in theorem (\ref{ggholder}) is multiplicative. Therefore it is enough to prove the identity for prime powers $p^a$ for arbitrary prime $p$ and $a > 0$. Let us check the left hand side of the identity (\ref{ggholderidentity}):

\begin{eqnarray*}
 \hat{f}_{r}(n) &=& \sum\limits_{d|r}f_{r}(d^s)c_{\frac{r}{d},s}(n)\\
 &=& \sum\limits_{d|r}F((d^s,r^{s})_s)c_{\frac{r}{d},s}(n)\\
 &=& \sum\limits_{d|r}F(d^s)c_{\frac{r}{d},s}(n).
\end{eqnarray*}
Put $r=p^a$. Then

\begin{eqnarray*}
 \hat{f}_{p^a}(n) &=& F(1)c_{p^a,s}(n)+\sum\limits_{\substack{d|p^{a}\\d\neq 1}}F(d^s)c_{\frac{p^{a}}{d},s}(n)\\
  &=& F(1)c_{p^a,s}(n)+F(p)\sum\limits_{d|p^{a-1}}c_{\frac{p^{a-1}}{d},s}(n).
  \end{eqnarray*}
$F$ being multiplicative and identically not zero, $F(1)=1$. Now the the first term on the right side sum in the above expression has to be determined using Result (\ref{cpacases}) and the second term using the result (\ref{cpsumcases}).  We  see that

\begin{eqnarray}
 \hat{f}_{p^a}(n) &=& \begin{cases}
 p^{sa} - p^{s(a-1)} + F(p)p^{s(a-1)}  &\text{ if }p^{sa}|n\\                 
   - p^{s(a-1)} + F(p)p^{s(a-1)} &\text{ if }p^{sa}\nmid n, p^{s(a-1)} | n\\
      0 + F(p) 0 &\text{ if } p^{s(a-1)} \nmid n
                   \end{cases}\nonumber\\
&=& \begin{cases}
 p^{s(a-1)}(F(p) + p^s-1) & \text{ if }p^{sa}|n\\                 
    p^{s(a-1)} (F(p)-1) &\text{ if }p^{sa}\nmid n, p^{s(a-1)} | n\\
      0 &\text{ if } p^{s(a-1)} \nmid n
                   \end{cases}.
  \end{eqnarray}
  Let us now evaluate the right hand side of the identity. We will evaluate $(F* J_s) (p^a)$ first.

\begin{eqnarray}
 (F*J_s)(p^a) &=&F(1)J_s(p^a)+\sum\limits_{\substack{d|p^{a}\\d\neq 1}}F(d)J_s(\frac{p^{a}}{d})\nonumber\\
 &=& p^{sa}(1-p^{-s}) + F(p)\sum\limits_{d|p^{a-1}}J_s(\frac{p^{a-1}}{d})\nonumber\\
 &=&  p^{sa}-p^{s(a-1)} + F(p)p^{s(a-1)}\nonumber\\
 &=&  p^{s(a-1)}[p^{s}-1 + F(p)].
 \end{eqnarray}
 We have $m^s=\frac{p^{sa}}{(n,p^{sa})_s}$. Therefore 
 \begin{eqnarray*}
    m = \begin{cases}
                1 &\text{ if }p^{sa}|n\\
                 p &\text{ if }p^{sa}\nmid n, p^{s(a-1)} | n\\
                p^{\delta} &\text{ otherwise where }0\leq \delta \leq a-2
                \end{cases}.
 \end{eqnarray*}
We will evaluate the values of $(F*\mu)(m)$ and $(F* J_s)(m)$ in three different cases.
\begin{enumerate}[{Case }1 : ]
 \item When  $p^{sa}|n$, $(F*\mu)(m) = (F*\mu)(1) = F(1)\mu(1) = 1$. Now $(F*J_s)(m) = F(1)J_s(1) = 1$.
 \item When $p^{sa}\nmid n, p^{s(a-1)} | n$, $(F*\mu)(m)=(F*\mu)(p) = F(1)\mu(p)+F(p)\mu(1) = F(p)-1$. Now $(F*J_s)(m)=F(1)J_s(p)+F(p)J_s(1) = F(p)+p^s-1$.
 \item When $p^{s(a-2)}\nmid n$, $(F*\mu)(m)=(F*\mu)(p^{\delta}) = F(p^{\delta})\mu(1)+F(p^{\delta-1})\mu(p) = F(p)\mu(1)+F(p)\mu(p)  = 0$. 
\end{enumerate}
Now if we substitute these values to the identity, given that $p^{s}-1 + F(p)\neq 0$, the identity (\ref{ggholderidentity}) follows.
\end{proof}

Take $F(n)=\varepsilon(n)=\left[\frac{1}{n}\right]$, the identity for the Dirichlet convolution. Then $F$ is strongly multiplicative. Now $F((n,r^s)_s)=\begin{cases}1\text{ if }(n,r^s)_s)=1\\
0\text { otherwise}                                                                                                                                                                                                     \end{cases} = \rho_{r,s}(n)$. Put $f_r(n)= \rho_{r,s}(n)$. Then $f_r$ satisfies the conditions of the above theorem. The DFT of $f_r$ is nothing but $c_{r,s}(n)$. So we see that $$c_{r,s}(n)=\hat{f}_r(n) = \frac{\mu(m)J_s(r)}{J_s(m)}$$ which is the generalized H\"{o}lder identity given in equation (\ref{ghi}).


\begin{thebibliography}{10}

\bibitem{tom1976introduction}
Tom Apostol.
\newblock {\em Introduction to analytic number theory}.
\newblock Springer, 1976.

\bibitem{beck2010finite}
Matthias Beck and Mary Halloran.
\newblock Finite trigonometric character sums via discrete fourier analysis.
\newblock {\em International Journal of Number Theory}, 6(01):51--67, 2010.

\bibitem{cohen1949extension}
Eckford Cohen.
\newblock An extension of ramanujan's sum.
\newblock {\em Duke Math. J}, 16(85-90):2, 1949.

\bibitem{cohen1950extension}
Eckford Cohen.
\newblock An extension of ramanujan's sum. ii. additive properties.
\newblock {\em Duke Math. J}, 16(85-90):2, 1949.

\bibitem{cohen1955class}
Eckford Cohen.
\newblock A class of arithmetical functions.
\newblock {\em Proceedings of the National Academy of Sciences},
  41(11):939--944, 1955.

\bibitem{cohen1956extensionof}
Eckford Cohen.
\newblock An extension of ramanujan's sum. iii. connections with totient
  functions.
\newblock {\em Duke Math. J}, 23:623--630, 1956.

\bibitem{cohen1958representations}
Eckford Cohen.
\newblock Representations of even functions (mod r), i. arithmetical
  identities.
\newblock {\em Duke Math. J}, 25:401--421, 1958.

\bibitem{haukkanen2012discrete}
Pentti Haukkanen.
\newblock Discrete ramanujan-fourier transform of even functions (mod $ r $).
\newblock {\em arXiv preprint arXiv:1210.0295}, 2012.

\bibitem{holder1936theorie}
Otto H{\"o}lder.
\newblock Zur theorie der kreisteilungsgleichung $ k\_m (x)= 0$.
\newblock {\em Prace Matematyczno-Fizyczne}, 1(43):13--23, 1936.

\bibitem{mccarthy1960generation}
Paul~J McCarthy.
\newblock The generation of arithmetical identities.
\newblock {\em J. reine angew. Math}, 203:55--63, 1960.

\bibitem{montgomery2006multiplicative}
Hugh~L Montgomery and Robert~C Vaughan.
\newblock {\em Multiplicative number theory I: Classical theory}, volume~97.
\newblock Cambridge University Press, 2006.

\bibitem{namboothiri4}
K~Vishnu Namboothiri.
\newblock On solving a restricted linear congruence using generalized ramanujan
  sums.
\newblock {\em arXiv preprint arXiv:1708.04505 [math.NT]}, 2017.

\bibitem{namboothiri2018restricted}
K~Vishnu Namboothiri.
\newblock On the number of solutions of a restricted linear congruence.
\newblock {\em Journal of Number Theory}, 188:324--334, 2018.

\bibitem{nicol1954sterneck}
Charles~A Nicol and Harry~S Vandiver.
\newblock A von sterneck arithmetical function and restricted partitions with
  respect to a modulus.
\newblock {\em Proceedings of the National Academy of Sciences},
  40(9):825--835, 1954.

\bibitem{samadi2005ramanujan}
Saed Samadi, M~Omair Ahmad, and MN~Shanmukha Swamy.
\newblock Ramanujan sums and discrete fourier transforms.
\newblock {\em IEEE Signal Processing Letters}, 12(4):293--296, 2005.

\bibitem{schramm2008fourier}
Wolfgang Schramm.
\newblock The fourier transform of functions of the greatest common divisor.
\newblock {\em Integers}, 8(1):A50, 2008.

\bibitem{sivaramakrishnan1988classical}
R~Sivaramakrishnan.
\newblock {\em Classical theory of arithmetic functions}, volume 126.
\newblock CRC Press, 1988.

\bibitem{sundararajan2001discrete}
D~Sundararajan.
\newblock {\em The discrete Fourier transform: theory, algorithms and
  applications}.
\newblock World Scientific, 2001.

\bibitem{terras1999fourier}
Audrey Terras.
\newblock {\em Fourier analysis on finite groups and applications}, volume~43.
\newblock Cambridge University Press, 1999.

\bibitem{toth2011discrete}
L{\'a}szl{\'o} T{\'o}th and Pentti Haukkanen.
\newblock The discrete fourier transform of r-even functions.
\newblock {\em Acta Univ. Sapientiae, Math}, 3(1):5--25, 2011.

\end{thebibliography}
\end{document}